\definecolor{nb}{rgb}{.6,.176,1}
\definecolor{sienna}{rgb}{1,0,0}
\definecolor{darkgreen}{rgb}{0,.5,0}
 \newtheorem{theorem}{Theorem}[section]
 \newtheorem{proposition}[theorem]{Proposition}
 \newtheorem{corollary}[theorem]{Corollary}
 \newtheorem{conjecture}[theorem]{Conjecture}
 \newtheorem{lemma}[theorem]{Lemma}
 \newtheorem{remark}{Remark}
 \newtheorem{defi}[theorem]{Definition}
 \newcommand{\bt}{\begin{theorem}}
 \newcommand{\et}{\end{theorem}}
 \newcommand{\bl}{\begin{lemma}}
 \newcommand{\el}{\end{lemma}}
 \newcommand{\bp}{\begin{proposition}}
 \newcommand{\ep}{\end{proposition}}
 \newcommand{\bcor}{\begin{corollary}}
 \newcommand{\ecor}{\end{corollary}}
 \newcommand{\br}{\begin{remark}\rm}
 \newcommand{\er}{\end{remark}}
 \newcommand{\bcon}{\begin{conjecture}}
 \newcommand{\econ}{\end{conjecture}}
 \newcommand{\bd}{\begin{defi}}
 \newcommand{\ed}{\end{defi}}
 \newcommand{\ben}{\begin{enumerate}}
 \newcommand{\een}{\end{enumerate}}
 \newcommand{\Ebb}{\mbox{${\mathbb E}$}}
 \newcommand{\GG}{\mbox{${\mathcal G}$}}
\newcommand{\Zbold}{\mbox{${\mathbb Z}$}}
 \newcommand{\Nbold}{\mbox{${\mathbb N}$}}
  \newcommand{\eps}{\epsilon}
 \newcommand{\T}{{\mathbb T}}
 \renewcommand{\P}{\mathbb P}
 \newcommand{\E}{{\mathbb E}}
\newcommand{\Ccal}   {{\mathcal C }} 
\newcommand{\FF}{\mbox{${\mathcal F}$}}
\definecolor{nb}{rgb}{.6,.176,1}
\definecolor{sienna}{rgb}{1,0,0}
\definecolor{darkgreen}{rgb}{0,.5,0}
\newcommand{\bone}{{\mathbf 1}}
\def\1{I}
\renewcommand{\SS}{\mbox{${\mathcal S}$}}
\newcommand{\bP}{{\bf P}}
 \newcommand{\bE}{{\bf E}}
 \newcommand{\Pbb}{\mbox{${\mathbb P}$}}
\def\1{I} 
\def\keywords{\vspace{.5em}
{\textit{Keywords}:\,\relax%
}}
\begin{document}

 \renewcommand{\labelenumi}{{(\roman{enumi})}}

 \title{{\bf S.L.L.N. and C.L.T.  for Random Walks in I.I.D. Random Environment on 
  Cayley Trees}}

\date{}
\author{
Siva Athreya\footnote{Theoretical Statistics and Mathematics Unit, Indian Statistical Institute, 8th Mile Mysore Road, Bangalore,
560059, INDIA, athreya@isibang.ac.in}
\and 
Antar Bandyopadhyay\footnote{Theoretical Statistics and Mathematics Unit, Indian Statistical Institute, Delhi Centre, 7 S. J. S. Sansanwal Marg, New Delhi 110016, INDIA; and 
Theoretical Statistics and Mathematics Unit, Indian Statistical Institute, Kolkata, 203 B. T. Road, Kolkata 700108, INDIA, antar@isid.ac.in}
\and
Amites Dasgupta\footnote{Theoretical Statistics and Mathematics Unit, Indian Statistical Institute, Kolkata, 203 B. T. Road, Kolkata 700108, INDIA, amites@isical.ac.in}
\and
Neeraja Sahasrabudhe\footnote{Department of Mathematical Sciences, Indian Institute of Science Education and Research, Mohali, Knowledge city, Sector 81, Manauli, Sahibzada Ajit Singh Nagar, Punjab 140306, INDIA, neeraja@iisermohali.ac.in}
}

  \maketitle

\begin{abstract}
\noindent We consider the random walk in an  {independent and identically distributed (i.i.d.)}  random environment on a Cayley graph of a finite \emph{free product} of copies of $\Zbold$ and $\Zbold_2$. Such a Cayley graph is readily seen to be a regular tree.  Under a uniform elipticity assumption on the i.i.d. environment we show that the walk has positive speed and establish the annealed central limit theorem for the  graph distance of the walker from the starting point.
\end{abstract}

\keywords{Random walk on free group, random walk in random  environment, trees, transience, Central Limit Theorem, Positive Speed}

\section{Introduction}
\label{Sec:Intro}

In this article, we consider a \emph{random walk in random environment
  (RWRE)} model on a regular tree, which was introduced in
\cite{ABD13}. Like in any other (static) RWRE model, in our model also, we
first choose an \emph{environment} by some 
random mechanism and keep it fixed throughout the time evolution. 
A walker then moves randomly on the vertex set of a regular tree
in such a way that given the environment, its
position forms a time homogeneous Markov chain whose transition
probabilities depend only on the environment.  RWRE model on the one
dimensional integer lattice ${\mathbb Z}$ was first introduced by
Solomon in \cite{Sol75} where he gave explicit criteria for the
recurrence and transience of the walk for \emph{independent and
  identically distributed (i.i.d.)}  environment distribution.
Perhaps the earliest known results for RWRE on trees is by Pemantle
and Lyons \cite{LyPe92}, where they consider a model on
rooted tress, which later got to known as \emph{random conductance
  model}. In their model, the random conductances along each path from
vertices to the root are assumed to be independent and identically
distributed. The random walk is then shown to be recurrent or
transient depending on how large is the value of the average
conductance. Motivated by these, \cite{ABD13}, considered a RWRE model on a
regular tree, where the environment (or rather the transition laws)
at each vertex are
independent and also ``identically'' distributed. 
However, unlike in the usual RWRE models on integer lattices, such as 
on $\Zbold$ as introduced by \cite{Sol75}, or the random conductance models
on trees \cite{LyPe92}, it is not entirely obvious how to 
make the random transition laws on the vertices of a tree ``identically'' distributed.
To make this notion
of i.i.d.-ness of the environment rigorous in \cite{ABD13} defined 
the model on a
the finite free product of copies of $\Zbold$ and $\Zbold_2$
 and then transfers it back
to an appropriate degree regular tree which is essentially same
as a Cayley graph associated with the group. A more detailed description
is given in the following subsection. 
A similar model was also considered in \cite{Rozi01}.

In both  \cite{Rozi01, ABD13} under differing mild non-degeneracy assumptions the authors
proved that the RWRE is transient with probability one. In this work,
we extend their result and prove that under \emph{uniform ellipticity}
assumption on the i.i.d. environment (which is stronger assumption
than the one made in \cite{ABD13}), the walk has a positive drift 
away from the starting point and
admits an \emph{annealed Central Limit Theorem} under linear centring
and square-root scaling. Our proofs are motivated by the work
\cite{ABD13}.

\subsection{ Model and Main Results}
\label{SubSec:Results}

Even though our framework is same as in \cite{ABD13}, but for sake of 
completeness, 
we begin by providing a detailed description of the model below. \\

\noindent
{\bf Group structure:} 
Following \cite{ABD13} we will also consider a group $G$ which is a free
product of finitely many groups, say, $G_1, G_2, \cdots, G_k$ and
$H_1, H_2, \cdots, H_r$, where 
each $G_i \cong \Zbold$ and each $H_j \cong \Zbold_2$. Let $d = 2k + r$.

\medskip

\noindent
{\bf Cayley graph:} Let $G$ be a group defined above. 
Suppose $G_i = \langle a_i \rangle$ for $1 \leq i \leq k$ and 
$H_j = \langle b_j \rangle$ where $b_j^2 = e$ for $1 \leq j \leq r$.
Here by $\langle a \rangle$ we mean the group generated by a single element $a$.  
Let 
$S := \left\{a_1, a_2, \ldots, a_k\right\} \cup \left\{a_1^{-1}, a_2^{-1}, \ldots, a_k^{-1}\right\}
      \cup \left\{ b_1, b_2, \ldots, b_r \right\}$
be a generating set for $G$. We note that $S$ is a symmetric set, that is, $s \in S \iff s^{-1} \in S$. 

We now define a graph $\bar{G}$ with vertex set $G$ and edge set 
$E := \left\{ \left\{x,y\right\} \,\Big\vert\, yx^{-1} \in S \,\right\}$. We will say $x \sim y$ whenever $\{x,y\} \in E$.
Such a 
graph $\bar{G}$ is called a \emph{(left) Cayley Graph} of $G$ with respect to the generating set $S$. 
Since $G$ is a free product of groups which are isomorphic to either $\Zbold$ or 
$\Zbold_2$, 
it is easy to see
that $\bar{G}$ is a graph with no cycles and is regular with degree $d$, thus it is isomorphic to 
the $d$-regular infinite tree which we will denote by $\T_d$. 
We will abuse the terminology a bit and will write $\T_d$ for 
the Cayley graph of the group $G$. This way we essentially endow the 
$d$-regular tree, $\T_d$ a \emph{group structure}, which we will make use to 
define an \emph{i.i.d.} environment. 

Note that for the $d$-dimensional Euclidean lattice, such a group
structure is automatic, which is the product of $d$ copies of the
abelian free group $\Zbold$. In our case, all the difference appears
due to the fact that on $\T_d$, a group can only be obtained through
free product of several copies of $\Zbold$ and also with possible free
product of groups generated by \emph{torsion} elements.

We will consider the identity element $e$ of $G$ as the root of $\T_d$.
We will write $N\left(x\right) := \left\{ y \in G \,\Big\vert\,  yx^{-1} \in S \,\right\}$ 
for the set of all neighbors of a vertex $x \in \T_d$. 

Observe that from definition 
$N\left(e\right) = S$.

For $x \in G$, define the mapping $\theta_x : G \rightarrow G$ by $\theta_x\left(y\right) = yx$, 
then $\theta_x$ is an automorphism of $\T_d$. We will call
$\theta_x$ the \emph{translation by} $x$. For a vertex $x \in \T_d$ and $x \neq e$, 
we denote by $\left\vert x \right\vert$, 
the length of the unique path from the root $e$ to $x$ and $\left\vert e \right\vert = 0$. Further,
if $x \in \T_d$ and $x \neq e$ then we define $\overleftarrow{x}$ as the 
\emph{parent} of $x$, that is, the penultimate vertex on the unique path from
$e$ to $x$.

\medskip

\noindent
{\bf Random Environment:} Let $\SS := \SS_{e}$ be a  
collection of probability
measures  on the $d$ elements of $N\left(e\right) = S$. 
To simplify the presentation and avoid various measurability issues,
we assume that $\SS$ is a Polish space (including the possibilities
that $\SS$ is finite or countably infinite).
For each $x \in \T_d$,  $\SS_{x}$ is the the push-forward of the space
$\SS$  under the translation $\theta_x$, that is, 
$\SS_x := \SS \circ \theta_x^{-1}$. 
Note that the probabilities on $\SS_x$ have support on
$N\left(x\right)$. That is to say, an element
$\omega(x,\cdot)$ of $\SS_{x}$, is a probability measure
satisfying
$\omega\left(x, y\right) \geq 0 \,\, \forall \,\, y \in \T_d$ and
$ \mathop{\sum}\limits_{y \in N\left(x\right)} \omega\left(x, y\right) = 1$.

Let ${\mathcal B}_{{\mathcal S}_x}$ denote the Borel $\sigma$-algebra on $\SS_x$. The 
\emph{environment space} is defined as the measurable space $\left(\Omega, \FF\right)$ where

$\Omega := \mathop{\prod}\limits_{x \in \T_d} \SS_{x}$ and

$\FF := \mathop{\bigotimes}\limits_{x \in \T_d} {\mathcal B}_{{\mathcal S}_x}$.

An element $\omega \in \Omega$ will be written as
$\left\{ \omega\left(x, \cdot\right) \,\Big\vert\, x \in \T_d \,\right\}$.
An environment distribution is a probability $\P$ on $\left(\Omega, \FF\right)$. 
We will denote by $E$ the expectation taken with respect to the probability measure $\P$.

\medskip

\noindent
{\bf Random Walk:} Given an environment 
$\omega \in \Omega$, a random walk $\left(X_n\right)_{n \geq 0}$
is a time homogeneous Markov chain taking values in $\T_d$ 
with transition probabilities given by
$\left(\omega\left(x, y\right)\right)_{x, y \in \T_d}$.
Let $\Nbold_0 := \Nbold \cup \left\{ 0 \right\}$. 
For each $\omega \in \Omega$, we denote by
$\bP_{\omega}^{x}$ the law induced by $\left(X_n\right)_{n \geq 0}$ on
$\left( \left(\T_d\right)^{\Nbold_0}, \GG \right)$,
where $\GG$ is the $\sigma$-algebra generated by the cylinder sets, 
such that $\bP_{\omega}^{x}\left( X_0 = x \right) = 1.$  The probability measure $\bP_{\omega}^{x}$ is called the \emph{quenched law}
of the random walk $\left(X_n\right)_{n \geq 0}$, starting at $x$. We will use the notation $\bE_\omega^x$ for the expectation under the quenched measure $\bP_\omega^x$. 

Following Zeitouni \cite{Zei04}, we note that for every $B \in \GG$, the function
$\omega \mapsto \bP_{\omega}^{x}\left(B\right)$ is $\FF$-measurable. Hence, we may define the measure
$\Pbb^{x}$ on 
$\left( \Omega \times \left(\T_d\right)^{\Nbold_0},
\FF \otimes \GG \right)$ by the relation
\[
\Pbb^{x}\left( A \times B \right)
= 
\int_A \! \bP_{\omega}^{x}\left(B\right) \P\left(d\omega\right), 
\,\,\,\, \forall \,\,\, A \in \FF, \, B \in \GG. 
\]
With a slight abuse of notation, we also denote the marginal of 
$\Pbb^{x}$ on $\left(\T_d\right)^{{\mathbb N}_0}$
by $\Pbb^{x}$, whenever no confusion occurs. This probability distribution
is called
the \emph{annealed law} of the random walk $\left(X_n\right)_{n \geq 0}$, 
starting at $x$.  We will use the notation $\Ebb^x$ for the expectation under the annealed measure $\Pbb^x$

{\bf Assumptions:} Throughout this paper we will assume the following hold,
\begin{itemize}
\item[(E1)] $\P$ is a product measure on $\left(\Omega, \FF\right)$ with ``\emph{identical}'' marginals, that is, 
            under $\P$ the random probability laws $\left\{ \omega\left(x, \cdot\right) \,\Big\vert\, x \in \T_d \,\right\}$
            are independent and ``identically'' distributed in the sense that
            \begin{equation} \label{si} \P \circ \theta_{x}^{-1} = \P, \end{equation}
            for all $x \in G$. 
            
\item[(E2)] There exists $\eps > 0$ such that 
            \begin{equation} 
            \label{Equ:UEllip} 
            \P\left( \omega\left(e, s_i\right) > \eps \,\, \forall \,\, 1 \leq i \leq d \right) = 1.
            \end{equation}

\end{itemize}

We are now ready to state our main results. We begin with a law of
large numbers result for $\mid X_n \mid$ which also establishes that the
speed of walk is positive.
 
\begin{theorem} 
\label{speed} 
Assume (E1) and (E2). Then there exists $v >0$,  such that, 
\begin{align}
\lim_{n \rightarrow \infty}\frac{| X_n |}{n} =  v,   
\end{align}
almost surely with respect to $\P^{e}$
\end{theorem}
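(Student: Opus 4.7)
The plan is to prove Theorem \ref{speed} via a regeneration argument. The tree structure makes the construction especially transparent: once the walker reaches a vertex and never again visits its parent, everything that follows takes place in the subtree rooted at that vertex, which by (E1) carries an independent copy of the environment. Explicitly, by the transience established in \cite{ABD13} one has $|X_n|\to\infty$ almost surely under $\Pbb^{e}$; call $\tau\geq 1$ a \emph{regeneration time} if
\[
|X_m| < |X_\tau| \text{ for all } m < \tau \quad\text{and}\quad |X_m| > |X_\tau| \text{ for all } m > \tau,
\]
and let $\tau_1 < \tau_2 < \cdots$ be the successive regeneration times. At $\tau_k$ the walker has reached a new maximal depth at $X_{\tau_k}$ and never again visits $\overleftarrow{X_{\tau_k}}$.

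Existence of the $\tau_k$ together with finite first moments is the analytic core of the argument. From (E2), at every non-root vertex $x$ the walker moves to each of its $d-1$ children with probability $\geq\eps$. Iterating this uniform lower bound through the subtree rooted at $x$, and combining with the transience of \cite{ABD13} in the form of a hitting-time estimate on $|X_n|$, one shows that the probability that the walker ever returns to $\overleftarrow{x}$ is bounded above by some $1-\delta<1$ with $\delta=\delta(\eps, d)$. This uniform escape estimate in turn produces exponential tails for the excursion lengths $\tau_{k+1}-\tau_k$, and in particular
\[
\Ebb^{e}[\tau_{k+1}-\tau_k] < \infty \quad\text{and}\quad \Ebb^{e}[\,|X_{\tau_{k+1}}|-|X_{\tau_k}|\,] < \infty.
\]

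The structural heart of the proof is that the pairs $(\tau_{k+1}-\tau_k,\,|X_{\tau_{k+1}}|-|X_{\tau_k}|)$, $k\geq 1$, are i.i.d.\ under $\Pbb^{e}$. This is a standard Sznitman--Zerner style renewal argument: after $\tau_k$ the walker stays in the subtree rooted at $X_{\tau_k}$, whose environment is independent of that outside by (E1) and whose law is invariant under $\theta_{X_{\tau_k}}$, so the shifted post-$\tau_k$ excursion inherits the annealed law of the first excursion starting from $e$. The classical SLLN then gives
\[
\frac{|X_{\tau_n}|}{\tau_n} \xrightarrow[n\to\infty]{} \frac{\Ebb^{e}[\,|X_{\tau_2}|-|X_{\tau_1}|\,]}{\Ebb^{e}[\tau_2-\tau_1]} =: v, \qquad \Pbb^{e}\text{-a.s.,}
\]
with $v>0$ because $|X_{\tau_{k+1}}|-|X_{\tau_k}|\geq 1$ always. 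A standard interpolation between consecutive regenerations, using $|X_{\tau_k}|\leq |X_m|\leq |X_{\tau_k}|+(\tau_{k+1}-\tau_k)$ for $\tau_k\leq m<\tau_{k+1}$ and the fact that $(\tau_{k+1}-\tau_k)/k\to 0$ a.s.\ (Borel--Cantelli from finite first moment), extends the convergence to $|X_n|/n\to v$ for the full sequence.

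The hardest part is expected to be the rigorous justification of the i.i.d.\ structure: the event ``$\tau$ is a regeneration time'' depends on the \emph{future} of the walk, so conditioning on it alters the marginal distribution of $\omega(X_\tau,\cdot)$, and one must decompose events carefully into pieces supported on disjoint subtrees of $\T_d$ and exploit the product structure of (E1). The companion technical point is obtaining the exponential tail $\Pbb^{e}(\tau_2-\tau_1>n)\leq C e^{-cn}$ used to secure the finite moments, which requires bounding the time the walker spends in each explored subtree before either committing to escape or backtracking, using uniform ellipticity together with the branching of $\T_d$.
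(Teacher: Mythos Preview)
Your overall architecture---regeneration times, renewal SLLN on the increments, interpolation to the full sequence---is exactly what the paper does, and the moment bounds you sketch are supplied there by Propositions~\ref{l1bound} and~\ref{tau1} via a multi-type branching comparison.

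Where your proposal has a genuine gap is the i.i.d.\ claim for the increments $(\tau_{k+1}-\tau_k,\,|X_{\tau_{k+1}}|-|X_{\tau_k}|)$. The ``standard Sznitman--Zerner'' argument does not transfer verbatim to this model. When you translate $X_{\tau_k}$ to $e$ via $\theta_{X_{\tau_k}^{-1}}$, the subtree $\T_d(X_{\tau_k})$ does \emph{not} map to a fixed subset of $\T_d$: it maps to $\T_d$ with the branch through one particular generator removed, and which generator that is depends on the \emph{type} $s_{X_{\tau_k}}\in S$ of the regeneration vertex (i.e., on which neighbour of $X_{\tau_k}$ is its parent). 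Since the law of $\omega(e,\cdot)$ is not assumed to be symmetric in the generators, the law of the post-$\tau_k$ excursion conditioned never to return to $\overleftarrow{X_{\tau_k}}$ depends on that type. So the increments are \emph{a~priori} only Markov-modulated by the type chain $(s_{X_{\tau_k}})_k$, not i.i.d.; your invariance sentence ``the shifted post-$\tau_k$ excursion inherits the annealed law of the first excursion starting from $e$'' is precisely the step that fails without further argument (and note that $e$ has no parent, so the first excursion is anyway not conditioned in the same way).

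The paper addresses this by decomposing each increment according to the type of the regeneration vertex, setting
\[
Y_i(s)=(\tau_i-\tau_{i-1})\,\1_{\{s_{X_{\tau_i}}=s\}},\qquad Z_i(s)=(|X_{\tau_i}|-|X_{\tau_{i-1}}|)\,\1_{\{s_{X_{\tau_i}}=s\}},
\]
arguing that for each fixed $s\in S$ these are i.i.d., applying the SLLN to each, and summing over $s$ to recover $\tau_i-\tau_{i-1}$ and $|X_{\tau_i}|-|X_{\tau_{i-1}}|$. (Underlying this is the fact, cited from \cite{ABD13}, that $s_{X_{\tau_i}}$ is uniform on $S$.) To close your argument you should either adopt this type decomposition or explicitly justify why the type modulation does not obstruct the i.i.d.\ renewal structure.
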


 In  \cite{ABD13}, it was pointed out that under (E1), (E2),
$\liminf_{n \rightarrow \infty} \frac{\left\vert X_n \right\vert}{n} > 0$, 
if  $\eps > \frac{1}{2\left(d-1\right)}$. 
The above result not only establishes that the walk on $\T_d$ has
a \emph{positive speed}, but also shows that the corresponding limit
exits almost surely. Our next result is an annealed central limit theorem for 
$\mid X_n \mid$.

\begin{theorem} 
\label{CLT}
 Assume (E1)and (E2). Then there exists $\sigma^2 > 0$ such that, 
 under $\P^{e}$,
\begin{equation}
\sqrt{n} \left( \frac{|X_n|}{n} - v \right) \stackrel{d}{\rightarrow}
Z,
\end{equation} 
with $Z ~ \mbox{Normal}(0, \sigma^2)$.
\end{theorem}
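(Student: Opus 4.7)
The plan is to establish the CLT via a regeneration-time decomposition, the standard Sznitman--Zerner--Kesten approach for RWRE adapted to the tree setting (as in \cite{ABD13, Rozi01}). Let $\tau_0:=0$ and, for $k\geq 1$, let $\tau_k$ be the $k$-th index $n$ such that $|X_m|<|X_n|$ for all $m<n$ and $|X_m|\geq |X_n|$ for all $m\geq n$. Because $\T_d$ is a tree, this forces the walk to stay, forever after time $\tau_k$, inside the subtree rooted at $X_{\tau_k}$. Transience of the walk (established in \cite{ABD13}) together with (E2) guarantees $\tau_k<\infty$ almost surely under $\Pbb^e$ for every $k$.

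Next I would show that the blocks
\[
W_k := \bigl(\tau_{k+1}-\tau_k,\; |X_{\tau_{k+1}}|-|X_{\tau_k}|\bigr),\qquad k\geq 1,
\]
are i.i.d.\ under $\Pbb^e$. Since after $\tau_k$ the walk only ever queries environments in the subtree rooted at $X_{\tau_k}$, (E1) makes these environments independent of everything used up to time $\tau_k$, and the translation invariance $\P\circ\theta_x^{-1}=\P$ identifies the conditional law of the future block with an unconditional copy of $W_1$. Iterating gives the full i.i.d.\ structure.

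The key and most delicate step is to establish $\Ebb^e\bigl[(\tau_2-\tau_1)^2\bigr]<\infty$; this dominates the second moment of $|X_{\tau_2}|-|X_{\tau_1}|$ as well, since single-step displacements of $|X_n|$ are bounded by $1$. The approach is to exploit (E2) to engineer, at every vertex of depth $\geq 1$, a \emph{forced-descent} event of probability at least $\bigl((d-1)\eps\bigr)^L$ under which the walk takes $L$ consecutive steps further from the root. Conditional on such a descent, the probability of ever returning to the vertex where the streak began is bounded above by a constant to the $L$-th power, via a one-dimensional comparison of the depth process $|X_n|$ with a nearest-neighbor walk on $\Zbold_{\geq 0}$ having strictly positive drift (obtained by grouping the $d-1$ ``child'' directions against the single ``parent'' direction and invoking (E2)). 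Choosing $L$ large enough makes each forced descent a successful regeneration with uniformly positive probability, so the number of trials until success is geometrically bounded, yielding exponential tails for $\tau_2-\tau_1$ and hence moments of every order. This second-moment estimate is the principal obstacle, and the place where \emph{uniform} ellipticity (E2), rather than mere almost-sure positivity of the transition probabilities, is genuinely required.

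With i.i.d.\ blocks and finite second moments in hand, the classical bivariate CLT for i.i.d.\ sums yields, under $\Pbb^e$,
\[
\frac{1}{\sqrt{k}}\bigl(\tau_k - k\nu,\; |X_{\tau_k}| - k\mu\bigr) \stackrel{d}{\rightarrow} \mathcal{N}(0,\Sigma),
\]
where $\nu=\Ebb^e[\tau_2-\tau_1]$, $\mu=\Ebb^e[|X_{\tau_2}|-|X_{\tau_1}|]$, and $\Sigma$ is the covariance matrix of $W_1$; note that $v=\mu/\nu$ by Theorem~\ref{speed}. An Anscombe-type random time-change using $k(n):=\max\{k:\tau_k\leq n\}$, combined with the deterministic bound $\bigl| |X_n|-|X_{\tau_{k(n)}}| \bigr|\leq \tau_{k(n)+1}-\tau_{k(n)}$ and the tightness of inter-regeneration gaps, then transfers the CLT from the random subsequence $(\tau_k)$ to deterministic $n$, producing $\sqrt{n}\bigl(|X_n|/n-v\bigr)\stackrel{d}{\rightarrow}\mathcal{N}(0,\sigma^2)$ with $\sigma^2=\nu^{-1}\mathrm{Var}\bigl(|X_{\tau_2}|-|X_{\tau_1}|-v(\tau_2-\tau_1)\bigr)$, which is strictly positive since the integrand cannot be almost surely constant under (E2).
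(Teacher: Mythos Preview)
Your overall architecture---regeneration times, an i.i.d.\ block decomposition, a CLT for the block sums, and an Anscombe-type random time change---is exactly the paper's, and your identification of the second moment of $\tau_2-\tau_1$ as the crux is correct. Two points need attention, one minor and one substantive.

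The minor point concerns the i.i.d.\ structure. After translating $X_{\tau_k}$ to $e$, the subtree the walk is confined to is $\T_d$ with \emph{one specific neighbor of $e$ deleted}, namely the one corresponding to the ``type'' $s_{X_{\tau_k}}=\overleftarrow{X_{\tau_k}}^{\,-1}X_{\tau_k}\in S$. Because the law of $\omega(e,\cdot)$ is not assumed exchangeable over $S$, the conditional law of the post-regeneration block depends on this type, so the $W_k$ are not literally i.i.d.\ as you state. The paper handles this by writing each block as $\sum_{s\in S}W_k(s)$ with $W_k(s)$ carrying the indicator $\{s_{X_{\tau_k}}=s\}$, arguing that the $S$-indexed vectors $(W_k(s))_{s\in S}$ are i.i.d.\ in $k$, and applying the multivariate CLT before summing. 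Your argument is easily patched the same way.

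The substantive gap is in your moment bound. The one-dimensional comparison you invoke---that the depth process $|X_n|$ stochastically dominates a nearest-neighbor walk on $\Zbold_{\geq 0}$ with \emph{strictly positive drift}---requires that the step-away-from-root probability exceed $\tfrac12$ at every vertex, i.e.\ $(d-1)\eps>\tfrac12$. Assumption (E2) asserts only $\eps>0$, and the paper explicitly notes (just after Theorem~\ref{speed}) that the naive drift bound needs $\eps>1/(2(d-1))$. For smaller $\eps$ the environment can produce arbitrarily long stretches where the walk is pushed toward the root, your comparison walk is then recurrent, and the forced-descent scheme yields no uniform bound on the return probability and hence no geometric tail for $\tau_2-\tau_1$. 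The paper's replacement for this step is the main technical content of Section~\ref{Sec:Regeneration}: a coupling of the walk restricted to geodesics $[x,y]$ with one-dimensional chains, a colouring scheme that produces a \emph{multi-type branching process} $Z_\psi$ shown to be supercritical for large enough block length $\psi$ (Proposition~\ref{BP1}), and from its survival an exponential tail for the first regeneration level (Proposition~\ref{l1bound}). Moments of $\tau_1$ are then assembled from separate moment bounds on the local time $L(e)$ and on the number $\mathcal{D}$ of distinct sites visited before $\tau_1$ (Proposition~\ref{tau1}). This branching-process machinery is precisely what replaces the unavailable drift comparison, and your outline does not supply it.
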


We note here that although we define the walk starting at $X_0 = e$,
 the root, results hold for starting at any vertex $x$
 of $\T_d$. This is because the environment is invariant under the translation
 by the group $G$.  Indeed it will be evident from the proofs that the
 constants $v$ and $\sigma^2$ are also independent of the starting
 position. Thus the Theorems ~\ref{speed} and ~\ref{CLT} hold for any
 initial distribution of $X_0$ on the vertex set of $\T_d$.

The basic framework of proof is inspired by the arguments laid out in
\cite{CS11}. Formally speaking we begin by defining regeneration times
as first time the walk reaches a new level and never visits it
again. We then prove moment bounds for these regeneration times and
followed established techniques laid out in the RWRE literature to
obtain our results. However, we note that the nature of i.i.d in the
environment (assumption (E1)) in this paper is derived from the group
structure of the graph and is a different from that of the environment
in \cite{CS11}. So, even though the basic framework was available we
needed different techniques for executing the same.

\section{Regeneration Times}
\label{Sec:Regeneration}

In this section we shall introduce a sequence of \emph{regeneration
  times} and provide moment bounds for them. We begin with some
notation.  For any $x, y \in \T_d$, recall that  $ [x,y] = \{ \{x_i\}_{i=0}^n \mid x_0 = x, x_n = y, x_i^{-1}x_{i-1} \in S, 1 \leq i \leq n \}.$  Let $T_d(y)$ be the sub-tree rooted at $y$, i.e $\{ x \in \T_d: y \in [e,x] \}$ and $\T^n_d =\{x \in \T_d: \mid x \mid = n \}$. The type of $x \in \T_d, x \neq e$ is $s\in S$ if $\overleftarrow{x}^{-1} x = s$ and we shall denote it by $s_x$.

\begin{equation}
\label{T}
T(y) := \inf \{ n \ge 0 \colon X_{n} = y\} 
\end{equation}
and
\begin{equation}
 R(y) := \inf\{ n \ge 1 \colon X_{n-1}\in \T_d(y), \, X_n= y\},
 \label{R}
\end{equation}
be the hitting time of $y$ and the return time to $y$, respectively.
We also define, 
\begin{equation}
\label{beta0}
\T_n := \inf \{ k \ge 0 \colon X_k \in \T^n_d \} 
\end{equation}
and
\begin{equation}
R := \inf\{ n \ge 1 \colon X_{n} = X_0\},
\end{equation}
to be the hitting time of level $n$ and return time of the walk to its
starting point, respectively. 

The \emph{first regeneration level } is then defined as
$l_1 := \inf \{ k \geq 1: R(X_{T_k}) = \infty \}$,
and the
\emph{$n-\mbox{th}$ regeneration level} for $n \geq 2$ is defined 
recursively as 
$l_n := \inf \{ k \geq l_{n-1}: R(X_{T_k}) = \infty \}$.
Regeneration times for $n \geq 1$, are defined by
\begin{equation} 
\label{regt}
\tau_n := \left\{ \begin{array}{lcl}
T_{l_n} & \,\, & \mbox{if\ } l_n < \infty; \\
            &       &                                       \\
\infty   &  \,\, & \mbox{otherwise}.
\end{array} \right.
\end{equation}

\subsection{Tail bounds for the first regeneration time}
\label{SubSec:Tail-Bounds}
We begin by proving tail bounds for first regeneration
level, followed by moment bounds on number of visits of the walk to
the root and number of distinct vertices visited before first
regeneration. We conclude this section with the required moment bounds
on the regeneration times defined above (See Proposition~\ref{tau1}).

Let $\{h_n(x,y) |n \geq 1, x, y \in \T_d, \mbox{ and } x \sim y \}$ be
i.i.d Exponential random variables with mean $1$. Suppose $X_0 =x$, it is easy to verify that 
$\displaystyle X_{n+1} = \underset{y \sim X_n}{\mbox{argmin}} \frac{h_{n+1}(X_n, y)}{\omega(X_n, y)}$.
Fix a finite sub-tree $\Ccal$ of $\T_d$ with $x \in \Ccal$.  Let
$\{Y^{\Ccal}_n\}_{n \geq 0}$ be such that $Y^{\Ccal}_0 = x$ and
$Y^{\Ccal}_{n+1} = \underset{y \sim Y^{\Ccal}_n, y \in
  \Ccal}{\mbox{argmin}} \frac{h_{n+1}(Y^{\Ccal}_n,
  y)}{\omega(Y^{\Ccal}_n, y)}$. It is easy to see that
$\{Y^{\Ccal}_n\}_{n \geq 0}$ is a Markov chain on $\Ccal.$ We shall
construct a coupling with the walk $\{X_n: n \geq 1\}$ so that
$Y^{\Ccal}$ has the same law as $X$ whenever it visits the sub-tree
$\Ccal.$ Define
$\lambda_1 = \inf \{ n \geq 0 : X_n \notin \Ccal \}$,
and recursively for $i \geq 1$,
$$\mu_{i} = \inf \{ n \geq \lambda_i : X_n \in \Ccal \} \mbox{
  followed by } \lambda_{i+1} = \inf \{ n \geq \mu_i : X_n \notin
\Ccal \}.$$ 
Define for each $k \geq 1$,
\[ 
W_k = 
\begin{cases} X_k & \mbox{ if } k \leq \lambda_1-1;\\ 
X_{\mu_{j} + k} & \mbox{ if } \mu_j <  k \leq \lambda_{j}-1, 
\mbox{for some } j \geq 1.
\end{cases} 
\]
Note that as $\T_d$ is a tree, for all $i \geq 1,$
$X_{\lambda_{i}-1} = X_{\mu_i} \in \Ccal.$ Further $X_{\mu_{j} + k} \in \Ccal$ if  $\mu_j <  k \leq \lambda_{j}-1$ for some  $j \geq 1$. It is easy to see that $\{W_n\}_{n \geq 0}$ is a Markov chain on $\Ccal$ and has the same law as $\{Y^{\Ccal}_n\}_{n \geq 0}.$

\medskip
\noindent
\textbf{Colouring scheme :} \rm We begin by colouring the root $e$ as
red. Let $k \geq 1$ and $\psi \geq 1$. A vertex $y \in \T^{k\psi}_d$ is coloured red if and only if 
\begin{itemize}
\item its ancestor at level $(k-1)\psi$, say $x$, is coloured red, and 
\item $\{Y_n^{[x,y]}\}_{n \geq 0}$, started at $x$, hits $y$ before
  returning to $x$.
\end{itemize}
For each $\psi \geq 1$ and $k \geq 1$, $s \in S$ let $Z_\psi(k,s)$ be the number of red vertices at level $k \psi$ of type $s$. Let $Z_\psi(0) = \{ e \}$ and for $k \geq 1$, define $$Z_\psi(k) :=  \{Z_\psi(k,s) :{s \in S}\}.$$
Under the annealed measure, $\{ Z_\psi\}$ is a multi-type Branching process with expected offspring matrix $M= (m_{Sue})_{s\in S, u\in S}$ is given by  
\[ 
m_{su} = \E \left[  \sum_{x_\psi  \in \T^\psi_d} \left ( \sum_{m=1}^{\psi-1} \prod_{j=1}^m \frac{\omega(x_j, x_{j-1})}{\omega(x_j, x_{j+1})} \right )^{-1} \right], \]
where $s, u \in S  \mbox{ and } s_{x_1} = s, \mbox{ and }  s_{x_\psi} = u.$ 

\begin{proposition} \label{BP1} Assume (E1) and (E2). There exists $\psi \geq 1$ such that the $Z_\psi$ is supercritical.
\end{proposition}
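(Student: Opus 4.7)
My plan is to take $\psi = 1$ and observe that $Z_1$ is a (deterministic) supercritical multi-type branching process. For $\psi = 1$ the path $[x,y]$ from a parent $x$ to a child $y$ consists of just the two vertices $\{x, y\}$, so the restricted walk $\{Y^{[x,y]}_n\}$ started at $x$ has $y$ as its unique neighbour in $[x, y]$ and therefore satisfies $Y^{[x,y]}_1 = y$ almost surely. The colouring condition is thus trivially met: every child of a red vertex is red. Since each non-root vertex has exactly $d-1$ children (one per element of $S \setminus \{s^{-1}\}$ when its type is $s$), every row of the mean matrix $M(1)$ sums to $d-1$, i.e.\ $M(1)\mathbf{1} = (d-1)\mathbf{1}$.

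Applying Perron--Frobenius to the non-negative matrix $M(1)$ and using $\rho(M) \geq \min_s \sum_u m_{su}$ (with the test vector $\mathbf{1}$), we obtain $\rho(M(1)) = d-1$ exactly. Since the Cayley graph $\T_d$ is a genuine tree we have $d \geq 3$, whence $\rho(M(1)) \geq 2 > 1$ and $Z_1$ is supercritical. The proposition follows with $\psi = 1$.

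\textbf{Main obstacle.} For $\psi = 1$ the argument is essentially combinatorial and presents no real obstacle. The edge case $d = 2$ (Cayley graph equal to $\Zbold$) would yield the critical value $\rho(M(1)) = 1$, but that degenerate scenario reduces to classical one-dimensional RWRE and falls outside the tree-based framework. If later use in the paper required the proposition for some specific $\psi \geq 2$ (for instance, to gain independence in the moment estimates for $\tau_1$), the challenge would shift to lower bounding the expected total offspring
\[
\mu_s(\psi)=\E\Bigl[\sum_{y \in \T_d^\psi(x)}\Bigl(1+\sum_{m=1}^{\psi-1}\prod_{j=1}^m \omega(x_j, x_{j-1})/\omega(x_j, x_{j+1})\Bigr)^{-1}\Bigr]
\]
above $1$. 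The crude ellipticity bound from (E2) alone gives only $\mu_s(\psi) \gtrsim ((d-1)\epsilon)^\psi/\psi$, which is useless when $(d-1)\epsilon \leq 1$; one would then need to invoke the randomness of the environment via, for example, Jensen's inequality for the convex map $z \mapsto 1/(1+z)$ combined with the independence (along a single path) of the ratios $\omega(x_j, x_{j-1})/\omega(x_j, x_{j+1})$ granted by (E1).
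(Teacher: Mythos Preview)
Your argument is correct. With $\psi=1$ the segment $[x,y]$ collapses to a single edge, so the restricted walk $Y^{[x,y]}$ started at $x$ has $y$ as its sole neighbour and reaches it at time $1$ with probability $1$; every child of a red vertex is therefore red, the offspring matrix satisfies $m_{su}=\mathbf 1_{\{u\neq s^{-1}\}}$, all row sums equal $d-1$, and $M(1)\mathbf 1=(d-1)\mathbf 1$ gives $\rho(M(1))=d-1>1$ whenever $d\geq 3$.

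The paper proceeds differently: it keeps $\psi$ as a free parameter, expresses the colouring probability through the gambler's ruin formula for $Y^{[x,y]}$ on a path of length $\psi$, and then invokes a density estimate from \cite{ABD13} to show that at least half of the $d(d-1)^{\psi-1}$ descendants at level $\psi$ have hitting probability at least $c_1^{-(\psi-1)}$ for some $1<c_1<d-1$. This yields $\sum_u m_{su}\geq \tfrac12 d(d-1)^{\psi-1}c_1^{-(\psi-1)}>1$ for $\psi$ large. Your route is far more elementary---no gambler's ruin, no appeal to \cite{ABD13}, and it even gives the exact eigenvalue. What the paper's route provides is supercriticality for all large $\psi$, and its row-sum bound \eqref{Equ:Row-Sum-Larger-than-1} is reused verbatim in the proof of Proposition~\ref{l1bound}, where one needs $m_0=\min_s\sum_u m_{su}>1$ and then chooses $\zeta$ so that $m_0^{(\zeta-1)\psi}(m_0-1)>1$. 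Your choice $\psi=1$ gives $m_0=d-1$, so $m_0^{\zeta-1}(m_0-1)=(d-1)^{\zeta-1}(d-2)$, which exceeds $1$ for $\zeta\geq 2$ (and already for $\zeta=1$ when $d\geq 4$); hence there is no hidden downstream obstruction, and your anticipated difficulty for $\psi\geq 2$ never needs to be faced.
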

\begin{proof} We will show that the largest eigenvalue, $\rho,$ of the offspring matrix $M$ is larger than $1$. We observe that for $1\leq i \leq n-1$
\[ \mathbf{P}_{\omega}(Y_n^{[x,y]}= x_{i-1} \mid Y^{[x,y]}= x_{i}) = \frac{\omega(x_i, x_{i-1})}{\omega(x_i, x_{i+1}) + \omega(x_i, x_{i-1})}\]
and
\[\mathbf{P}_{\omega}(Y_n^{[x,y]}= x_{i+1} \mid Y^{[x,y]}= x_{i}) = \frac{\omega(x_i, x_{i+1})}{\omega(x_i, x_{i+1}) + \omega(x_i, x_{i-1})}.\]
Using a standard gambler's ruin chain argument we can conclude
\[ \,\mathbf{P}_{\omega}(Y^{[x,y]} \mbox{ hits $y$ before returning to $x$ }) = \left ( \sum_{m=0}^{n-1} \prod_{j=0}^m \frac{\omega(x_j, x_{j-1})}{\omega(x_j, x_{j+1})} \right )^{-1}.\]
From the arguments in proof of Theorem 1 in  \cite{ABD13}, it is easy to see that,  for $1 < c_1 < d-1$,  there exists $n  \geq 1$, such that,
\[ 
\frac{1}{d(d-1)^{n-1}}  \# \left \{ \sigma_n \in \T^n_d : \left ( \sum_{m=1}^{n-1} \prod_{j=1}^m \frac{\omega(x_j, x_{j-1})}{\omega(x_j, x_{j+1})} \right )^{-1} \geq c_1^{-(n-1)} \right \} \geq \frac{1}{2} \mbox{\ a.s.\ } \P.
\]
Therefore, for all $s \in S$, and large enough $\psi$
\begin{eqnarray}
 \sum_{u \in S} m_{su} = \E \left[  \sum_{x_\psi \in \T^\psi_d} \left ( \sum_{m=1}^{\psi-1} \prod_{j=1}^m \frac{\omega(x_j, x_{j-1})}{\omega(x_j, x_{j+1})} \right )^{-1} \right ] \geq \frac{d(d-1)^{\psi-1}}{2}c_1^{-(\psi- 1)} > 1.
\label{Equ:Row-Sum-Larger-than-1}
\end{eqnarray}
As the row sums of $M$ are larger than $1$, this implies that the largest eigenvalue $\rho $ is bigger than $1$  and this implies the process is super-critical.

\end{proof}

Now, for $x \in \T_d$,  $y \in \T_d(x) $ is called a {\it first child}  if 
it is (almost surely) the minimiser of 
\begin{equation}
\label{firstch}
\min\limits_{z \sim x, z \neq \overleftarrow{x} }\frac{h_{1}(x,z)}{\omega(x,z)}
\end{equation}
For $m \geq 1$, 
\[ F_{m,x} = \{ y \in \T_d(y): \mid y \mid - \mid x\mid = m  \mbox{ and } y \mbox{ is a {\it first child}} \}. \] 
Let $\psi \geq 1, \zeta \geq 1$
\[ \Sigma_x = \T_d(x)\cap Z_{\psi} \cap_{k=1}^\infty F^c_{k\zeta\psi,x}\]

\[ B(x)= \{ \Sigma_{x} \mbox{ is finite}\},B_0 = B(e), \mbox{ and } B_{k} = B(X_{T_{k \psi \zeta }}), \,\,k \ge 1.\]

\begin{lemma} 
\label{independent}
The collection of events $\{B_i, i \geq 1\}$, are independent.
\end{lemma}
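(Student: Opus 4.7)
The plan is to exhibit each $B_k$ as a measurable function of the environment $\omega$ and the exponential clocks $\{h_n(x,y)\}$ restricted to a region $R_k \subset \T_d$, arrange these regions to be pairwise disjoint across $k$, and then invoke assumption (E1) together with the i.i.d.\ nature of the clocks to conclude mutual independence.

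The first step is to localize $B_k$. Conditional on $X_{T_{k\psi\zeta}}=x_k$, both ingredients of $\Sigma_{x_k}$ live inside the subtree $\T_d(x_k)$: the restriction of the coloring $Z_\psi$ is built from $Y^{[u,v]}$-walks with $u,v\in \T_d(x_k)$ (so uses only the environment and clocks inside that subtree), and the first-child indicator at a vertex $v$ depends only on $\omega(\overleftarrow{v},\cdot)$ and the single clock $h_1(\overleftarrow{v},\cdot)$. The boundary datum of whether $x_k$ itself is red must be handled separately, either by conditioning on the colour class of $x_k$ or by noting that $B_k$ is a tail-type event on $\Sigma_{x_k}$ and hence insensitive to a bounded initial segment of the associated branching structure.

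The crux is then to shrink the region of dependence from the full subtree $\T_d(x_k)$ to the slab $R_k := \T_d(x_k)\setminus \T_d(x_{k+1})$, so that the $R_k$'s partition the part of the tree actually used. The intended mechanism is the first-child pruning built into the definition of $\Sigma_{x}$: by choosing the spacing parameter $\zeta$ sufficiently large, one forces the ancestor of $x_{k+1}$ at depth $\zeta\psi$ below $x_k$ to lie (with probability one) in $F_{\zeta\psi,x_k}$, so it is peeled out of $\Sigma_{x_k}$. Since red status propagates only through an unbroken chain of red ancestors every $\psi$ levels, excising this single vertex severs the entire subtree $\T_d(x_{k+1})$ from any possible contribution to $\Sigma_{x_k}$. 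Iterating this across $k$ leaves $B_k$ measurable with respect to randomness in $R_k$ alone, and pairwise disjointness of the $R_k$ combined with (E1) and the i.i.d.\ clocks delivers the independence claim. As a byproduct, the translation invariance in (E1) can also be used to move $x_k$ to the identity, which will make the $B_k$ identically distributed, though only independence is asserted here.

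The main obstacle I expect is rigorously justifying the excision step. The first-child indicator depends only on the clocks $h_1$, whereas the vertex $x_{k+1}$ is selected by the random walk via the clocks $h_{n+1}$ for various $n\ge 1$; these are different random variables, so there is no automatic reason for the ancestor of $x_{k+1}$ at depth $\zeta\psi$ below $x_k$ to be a first child. I anticipate that resolving this will require either an explicit coupling between the walk-construction and the first-child construction (perhaps via a walk-adapted reformulation of the first-child notion), or a more indirect conditional/tower argument that first conditions on the path $\{X_{T_{j\psi\zeta}}\}$ and then uses the group translation invariance of (E1) to decouple the environment and clocks inside $R_k$ from everything outside.
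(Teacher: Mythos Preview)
Your plan is essentially the paper's argument. The paper's proof is three lines: it asserts that $B(x)$ is measurable with respect to the clocks (and implicitly the environment) inside $\T_d(x)$, then asserts that $X_{T_{i\psi\zeta}}$ is a first child, and from this claims the relevant subtrees are pairwise disjoint, giving independence. In particular, the obstacle you single out---that the first-child is defined via $h_1$ while the walk steps use $h_{n+1}$, so there is no a priori reason $X_{T_{i\psi\zeta}}$ should be a first child---is precisely the step the paper states without justification. The natural resolution is to read ``first child of $x$'' as the child of $x$ first visited by the walk (i.e.\ replace $h_1(x,\cdot)$ by $h_{T(x)+1}(x,\cdot)$ in \eqref{firstch}); with that reading $X_{T_m}$ is tautologically a first child of its parent for every $m$, and the paper's assertion holds. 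This is the ``walk-adapted reformulation of the first-child notion'' you yourself anticipate.

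Your slab decomposition $R_k=\T_d(x_k)\setminus\T_d(x_{k+1})$ is in fact more accurate than the paper's claim that the full subtrees $\T_d(X_{T_{i_j\psi\zeta}})$ are disjoint: for $i_j<i_l$ these subtrees may well be nested, so what is really needed is exactly your slab picture. One point to tighten in your sketch: the statement ``excising this single vertex severs the entire subtree $\T_d(x_{k+1})$ from $\Sigma_{x_k}$'' is only correct if $\Sigma_x$ is read as a \emph{pruned} branching process (removing a first child removes its entire progeny), not as a bare set intersection; this pruned reading is the one implicitly used in the paper when the mean matrix $\tilde{M}_\zeta=M^{(\zeta-1)\psi}(M-A)$ is computed in Proposition~\ref{l1bound}, so you should make it explicit.
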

\begin{proof} The event $ B(x) \in \sigma \{ h_n(z,y): z, y
\in \T_d(x) n \geq 1 \}$.  Let $i_1 < i_2 < \ldots < i_k$
be positive integers. Note, as observed,
$$ B_{i_j} \in \sigma \{ h_n(z,y) :z, y \in \T_d({X_{i_j \zeta \psi}}) n \geq 1 \}.$$ Note however that $X_{i_j \zeta \psi}$ is a first child at level $i_j \zeta \psi.$ and this implies $ \T_d({X_{i_j \zeta \psi}})\cap  \T_d({X_{i_l \zeta \psi}})= \emptyset.$  Hence  $\{B_{i_j} : 1 \leq j \leq k\}$ are mutually independent.
\end{proof}

\begin{proposition} 
\label{l1bound}
$\exists \,\, \gamma < 1$ such that for $n \geq 1$, we have $$\P(l_1 \geq n \psi \zeta) \leq \gamma^{n-1}.$$
\end{proposition}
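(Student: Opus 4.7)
The plan is to combine the supercriticality of the multi-type branching process $Z_\psi$ (Proposition \ref{BP1}) with the independence of the events $\{B_k\}_{k\geq 1}$ (Lemma \ref{independent}) to derive an exponential tail bound for $l_1$. Set $\gamma := \P(B(e))$; by translation invariance under $\theta_x$ from (E1) and the coupling construction of the $Y^\Ccal$ chains, $\P(B_k) \leq \gamma$ for every $k \geq 1$.

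First, I would verify that $\gamma < 1$. The event $B(e)^c$ asserts that the set $\Sigma_e$ is infinite. By Proposition \ref{BP1}, for appropriate $\psi$ the process $Z_\psi$ is supercritical and therefore survives with positive probability. The additional conditions $F^c_{k\zeta\psi,e}$ remove, at each special generation $k\zeta\psi$, those vertices that happen to be a first child of their parent; under uniform ellipticity (E2) the probability that a given vertex is a first child is bounded away from $1$, uniformly in the environment. For $\zeta$ large enough the induced thinning acts like a sparse Bernoulli percolation on the supercritical tree $Z_\psi$ and cannot kill it (standard Galton--Watson thinning argument), yielding $\P(\Sigma_e \text{ is infinite}) > 0$, i.e., $\gamma < 1$.

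Second, I would establish the inclusion $\{l_1 \geq n\psi\zeta\} \subseteq \bigcap_{k=1}^{n-1} B_k$. On $B_k^c$ the set $\Sigma_{X_{T_{k\psi\zeta}}}$ is infinite; using the coupling between $(X_n)$ and the chains $Y^{\Ccal}$ built from the exponential clocks $h_n$, every red vertex of $\T_d(X_{T_{k\psi\zeta}})$ is one that the walk reaches from $X_{T_{k\psi\zeta}}$ without first returning to its parent. By K\"onig's lemma the infinite red set contains an infinite self-avoiding ray, and chaining the successful gambler's ruin escape events along this ray produces a level that the walk hits and never revisits. Hence a regeneration occurs at some level $\le k\psi\zeta$, so $l_1 \leq k\psi\zeta < n\psi\zeta$, contradicting the hypothesis. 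Applying Lemma \ref{independent} to the independent events $B_1, \ldots, B_{n-1}$ then yields
\[
\P\bigl(l_1 \geq n\psi\zeta\bigr) \leq \P\Bigl(\bigcap_{k=1}^{n-1} B_k\Bigr) = \prod_{k=1}^{n-1} \P(B_k) \leq \gamma^{n-1}.
\]

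The main obstacle is the second step: translating the combinatorial statement ``$\Sigma_{X_{T_{k\psi\zeta}}}$ is infinite'' into a bona fide regeneration of the random walk. One must track carefully the coupling between $(X_n)$ and the $Y^{\Ccal}$ chains along the entire ray and argue that the ``not a first child at scale $k\zeta\psi$'' requirement is what ensures the walker commits to one subtree without backtracking past the chosen levels. The parameters must be chosen in the correct order: $\psi$ first (to obtain supercriticality from Proposition \ref{BP1}), and then $\zeta$ large enough that the first-child thinning does not destroy it.
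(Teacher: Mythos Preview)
Your plan is essentially the same as the paper's: use the inclusion $B_k^c \subseteq \{l_1 \le k\psi\zeta\}$, invoke the independence of the $B_k$ from Lemma~\ref{independent}, and show the common value $\gamma = \P(B_k) < 1$ by arguing that the first-child deletion does not destroy supercriticality for $\zeta$ large.

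Two points where the paper is more precise than your sketch. First, the paper takes $\gamma = \P(B_1)$, not $\P(B(e))$: the root has no parent, so $B(e)$ and $B_1$ are not obviously identically distributed, and translation invariance alone does not give $\P(B_k) \le \P(B(e))$. The paper instead decomposes $B_i = \bigcup_{s\in S} B_i^s$ according to the type $s_{X_{T_{i\psi\zeta}}}$ and observes $\P(B_i^s) = \P(B_j^s)$ for all $i,j\ge 1$, yielding $\P(B_i) = \gamma$ exactly. Second, for $\gamma < 1$ the paper replaces your thinning heuristic with an explicit mean-matrix computation: the pruned process has mean matrix $\tilde M_\zeta = M^{(\zeta-1)\psi}(M-A)$ for some substochastic $A$, and since $m_0 := \min_s \sum_u m_{su} > 1$ (from Proposition~\ref{BP1}), the row sums of $\tilde M_\zeta$ are at least $m_0^{(\zeta-1)\psi}(m_0 - 1) > 1$ for $\zeta$ large, giving supercriticality directly. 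The inclusion step you flag as the main obstacle is asserted rather than argued in the paper as well.
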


\begin{proof} Note that $B^c_i \subseteq \{ \mbox{level} \ i \psi \zeta \ \mbox{is a regeneration level} \}$. Hence, using that $B_i$ are independent we have,
\begin{equation*}
\P( l_1 \geq n \psi \zeta  ) \leq \P(\bigcap\limits_{i=1}^{n-1} B_i) = \prod\limits_{i=1}^{n-1} \P(B_i).
\end{equation*}
For $s \in S$, let $$ B^s_i :=  \{\overleftarrow{X}^{-1}_{T_{i \zeta \psi}} X_{T_{i \zeta \psi}} = s\} \cap B_i. $$ Note that for $s \in S$,  $\P(B^s_i) = \P(B^s_j)$ for all $1 \leq i,j \leq n.$  Hence , $$\P(B_i) =  \P(\cup_{s \in S} B_i^s) = \sum_{s \in S} \P(B_i^s) = \gamma$$ Therefore
$\P( l_1 \geq n \psi \zeta  ) \leq \prod_{i=1}^{n-1} \left ( \sum_{s \in S} \P(B_i^s) \right) = \gamma^{n-1}$.

Now we will show that we can choose $\zeta > 0$, such that, $\gamma < 1$. 
It is enough to show that we can choose $\zeta > 0$, such that, 
$\P(B_1) < 1$. For this we follow an argument similar to the proof of
Lemma 3.3 of \cite{CS11}. From definition, it is clear that the vertices which belong to $\Sigma_{X_{\psi \zeta}}$, 
are obtained as follows. The vertices at level $(\zeta - 1) \psi$ are of $d$-types, 
and has a distribution with mean matrix $M^{(\zeta - 1) \psi}$. Further, the vertices
at level $\zeta \psi$ has a number of various types of coloured vertices, and we have 
deleted the first child, thus the expectation matrix of such vertices is $M-A$, where 
$A$ is a $d \times d$-matrix with $0 \leq  A_{su} \leq 1$ and $A \bone = \bone$. 
Thus, 
$\gamma = \P(B_1)$ is at most as large as, the extinction probability of a multi-type  branching process with mean matrix $\tilde{M}_{\zeta} := M^{(\zeta - 1) \psi} (M-A)$. 
But, from equation ~\eqref{Equ:Row-Sum-Larger-than-1}, it follows that 
\[
m_0 := \min_{s \in S} \sum_{u \in S} m_{s u} > 1.
\]
So for any $s \in S$, the $s$-th row sum of $\tilde{M}_{\zeta}$ is
at least as large as $m_0^{(\zeta - 1) \psi} (m_0 - 1)$. Now select 
$\zeta  \geq 1$, such that, $m_0^{(\zeta - 1) \psi} (m_0 - 1) > 1$. From 
the argument above then we can conclude that $\gamma < 1$. 
\end{proof}

\subsection{Moment bounds}
For $x \in \T_d,$ let $T(x)$ be as in (\ref{T}) and $\tau_1$ be as in (\ref{regt}). Further let 
\begin{equation}
\label{eq:L}
 L(x) := \sum_{j=0}^{\infty} \1_{\{X_{j} = x\}} \mbox { and } {\cal D} := \sum\limits_{x\in \T_d} \1_{ \{ T(x) \leq \tau_1 \} }  
\end{equation}
be total number visits to $x$ and  the number of distinct vertices visited before $\tau_1$ respectively.

\begin{proposition} Assume (E1) and (E2). Then for $p \geq 1$,
 \label{tau1}
  \begin{enumerate}
    \item[(a)] $\E[L(e)^p] < \infty$;
    \item[(b)]$\E[{\cal D}^p] < \infty$; and
    \item [(c)] $\E[\tau_1^p]<\infty$.
      \end{enumerate}
\end{proposition}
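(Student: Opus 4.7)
The plan is to establish the three moment bounds in sequence, so that (a) and (b) can be used in the proof of (c). The main inputs are Proposition~\ref{l1bound} (exponential tail of the first regeneration level $l_1$), uniform ellipticity (E2), and translation invariance (E1). A recurring technical freedom is that the parameter $\zeta$ of Proposition~\ref{l1bound} can be enlarged, making the constant $\gamma$ there arbitrarily small; indeed $\gamma$ is dominated by the extinction probability of a supercritical branching process whose mean grows with $\zeta$, by the argument in the proof of Proposition~\ref{l1bound} via Proposition~\ref{BP1}. This flexibility will be invoked separately for each prescribed $p$.

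For part (a), the strong Markov property under the quenched measure $\mathbf{P}_\omega^e$ at the successive visits to $e$ shows that $L(e)-1$ is geometrically distributed with parameter $p_\omega := \mathbf{P}_\omega^e(T(e) = \infty)$, so
\[
\mathbf{P}^e(L(e) \geq n) \,=\, \E\bigl[(1 - p_\omega)^{n-1}\bigr].
\]
Using (E2), any prescribed length-$k$ path from $e$ is taken by the walk with quenched probability at least $\epsilon^k$; combining this with the gambler's-ruin computation from the proof of Proposition~\ref{BP1} (to lower-bound the escape probability from the end-vertex) and specializing to the path through the first regeneration vertex $X_{\tau_1}$ gives an annealed lower tail of the form $\P(p_\omega \leq \epsilon^{C\psi\zeta k}) \leq \P(l_1 > k) \leq \gamma^{k-1}$. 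Converting this into a tail estimate on $\E[(1-p_\omega)^{n-1}]$ yields polynomial-in-$n$ decay with exponent proportional to $\log(1/\gamma)/(\psi\zeta \log(1/\epsilon))$; enlarging $\zeta$ makes this exponent exceed any prescribed $p$, so $\E[L(e)^p] < \infty$.

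For part (b), since $\tau_1 = T_{l_1}$ is the first hitting time of level $l_1$, the walk visits only vertices of depth $\leq l_1$ before $\tau_1$. Thus $\mathcal{D} \leq 1 + \sum_{k=1}^{l_1} d(d-1)^{k-1} \leq C(d-1)^{l_1}$, and for any $p$, choosing $\zeta$ large enough that $(d-1)^{p\psi\zeta}\gamma < 1$ gives $\E[\mathcal{D}^p] \leq C^p \E[(d-1)^{p l_1}] < \infty$. For part (c), since $\tau_1 = \sum_x L^{[0,\tau_1]}(x)\mathbf{1}_{\{T(x) \leq \tau_1\}} \leq \sum_x L(x)\mathbf{1}_{\{T(x) \leq \tau_1\}}$, H\"older's inequality gives
\[
\tau_1^p \leq \mathcal{D}^{p-1} \sum_x L(x)^p \mathbf{1}_{\{T(x) \leq \tau_1\}},
\]
and Cauchy--Schwarz yields
\[
\E[\tau_1^p] \leq \E[\mathcal{D}^{2(p-1)}]^{1/2}\Bigl(\sum_x \E[L(x)^{2p}\mathbf{1}_{\{T(x) \leq \tau_1\}}]\Bigr)^{1/2}.
\]
The first factor is finite by (b). For the second, the strong Markov property at $T(x)$ together with translation invariance (E1) gives $\E[L(x)^q\mathbf{1}_{\{T(x) < \infty\}}] \leq \E[L(e)^q]$ which is finite by (a); a further Cauchy--Schwarz reduces the sum to $\E[L(e)^{4p}]^{1/2}\sum_x \P(l_1 \geq |x|)^{1/2}$ (using $\{T(x) \leq \tau_1\} \subseteq \{l_1 \geq |x|\}$), which converges whenever $(d-1)\gamma^{1/(2\psi\zeta)} < 1$, i.e.\ for $\zeta$ large enough. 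The main obstacle across all three parts is that any single fixed $\zeta$ yields only finitely many polynomial moments, since the bounded quantities grow exponentially in $l_1$ while Proposition~\ref{l1bound} provides only a geometric rate; obtaining all polynomial moments relies crucially on the rapid decay of the extinction probability of the branching process $Z_\psi$ as $\zeta \to \infty$.
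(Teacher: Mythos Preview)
Your overall architecture is reasonable, but there are two genuine gaps that prevent the argument from going through as written.

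\textbf{Part (a): quenched versus annealed.} You correctly identify that $L(e)$ is quenched-geometric with parameter $p_\omega=\mathbf{P}_\omega^e(\text{no return})$, so the task is to control the lower tail of $p_\omega$. Your proposed bound $\P(p_\omega\le\epsilon^{C\psi\zeta k})\le\P(l_1>k)$ does not make sense as stated: $p_\omega$ is a function of the environment alone, whereas $l_1$ and $X_{\tau_1}$ depend on both the environment and the walk. The event ``$l_1\le k$'' only says that \emph{this particular realization} of the walk reached a level $\le k$ and then happened never to return; it gives no lower bound on the quenched escape probability from $X_{\tau_1}$, which could be arbitrarily close to zero. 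The paper avoids this by introducing a purely environmental random variable $H$ (the first level at which, in every one of the $d$ directions from $e$, there is a vertex with quenched escape probability $1$), proving $\P(H=n)\le d\,q^{(d-1)^{n-2}}$ via independence over disjoint subtrees, and comparing $L(e)$ on $\{H=n\}$ to the local time of the coupled walk $Y^{\mathfrak{T}_n}$ on a finite spanning tree. The super-exponential tail of $H$ absorbs the merely exponential growth of $\E[q_\omega^{-pa}]$, giving all moments at once with no need to vary $\zeta$.

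\textbf{Parts (b) and (c): the rate at which $\gamma$ can be made small.} Your bounds $\mathcal{D}\le C(d-1)^{l_1}$ and $\sum_x\P(l_1\ge|x|)^{1/2}\asymp\sum_n(d-1)^n\gamma^{n/(2\psi\zeta)}$ require $(d-1)^{c\psi\zeta}\gamma<1$, i.e.\ $\gamma$ must decay \emph{exponentially} in $\zeta$. You justify this by saying ``the extinction probability of a supercritical branching process whose mean grows with $\zeta$'' becomes small. But a large mean does not by itself force the extinction probability to be small (let alone exponentially small): the extinction probability is bounded below by the probability of zero offspring, which you have not controlled. The paper sidesteps this entirely: for (b) it observes that the number of distinct vertices visited at each fixed level $k$ is stochastically dominated by a single Geometric$(1-q)$ variable (independent of $k$), so $\mathcal{D}$ grows only linearly in $l_1$ and the geometric tail from Proposition~\ref{l1bound} suffices for every $p$ with $\zeta$ fixed. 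For (c) it uses an enumeration $x_1,\dots,x_{\mathcal{D}}$ and extracts a summable factor $1/i^2$ via Chebyshev on $\{\mathcal{D}\ge i\}$, again needing only finiteness of fixed moments of $\mathcal{D}$ and $L(e)$.

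In short, your strategy of ``trade off growth against a tunable $\gamma$'' would need a quantitative decay rate for $\gamma(\zeta)$ that is nowhere established, and your treatment of (a) conflates an annealed regeneration event with a quenched escape bound. The paper's proof instead finds, in each part, a sharper structural bound (the environmental variable $H$; the per-level geometric domination) that makes a single fixed $\gamma<1$ sufficient.
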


\begin{proof} (a) For $n \geq 1$, let  $$\mathcal{U}_n = \{\{x^i\}_{i=1}^d : x_i \in \T^n_d  \mbox{ and } [x^i,e] \cap [x^j,e] = \{e\} \mbox{ for all } 1 \leq i \neq j \leq d\}.$$ We will denote any element of $U_n$ by $\mathcal{A}_n$ and the smallest sub-tree in $\T_d$ containing $\mathcal{A}_n$ will be denoted by $\mathfrak{T}_n$. Consider the walk $\{Y^{\mathfrak{T}_n}_k : k \geq 1 \}$. Define
$$\tilde{T}_{\mathcal{A}_n} = \inf \{ k \geq 1: Y^{\mathfrak{T}_n}_k \in \mathcal{A}_n \}, \mbox{ and }   \tilde{L}(e, \tilde{T}_{\mathcal{A}_n}) = \sum_{i=0}^\infty \1_{\{ Y^{\mathfrak{T}_n}_i = e, i < \tilde{T}_{\mathcal{A}_n} \}}$$
to be the hitting time of ${\mathcal A}_n$ and  the number of visits of $Y^{\mathfrak{T}_n}$ to $e$ before the walk $Y^{\mathfrak{T}_n}$ hits $\mathcal{A}_n$ respectively. Define  $\tilde{R}_n = \inf \{ k \geq 1: Y^{\mathfrak{T}_n}_k = e\}$ return time  to $e$. Under the quenched law, $ \tilde{L}(e, \tilde{T}_{\mathcal{A}_n})$ is a geometric random variable with parameter $q_\omega$ given by
\begin{eqnarray*}
q_w &=& {\mathbf P}_\omega (\tilde{T}_{\mathcal{A}_n} < \tilde{R}_n) = \sum\limits_{i=1}^d w(e, s_i) \left( \sum\limits_{j=1}^n \prod_{k=1}^{j-1} \frac{\omega(x^i_k, x^i_{k-1})}{\omega(x^i_k, x^i_{k+1})} \right)^{-1}.
\end{eqnarray*}

Using standard results about geometric random variables we know that for $p > 1$, $\exists \,\, c_p >0$, such that ,
\begin{eqnarray} \label{tildeLbound}
\E[\tilde{L}(e, \tilde{T}_{\mathcal{A}_n})^p ] & \leq& c_p \E[q_\omega^{-p}] = c_p\E \left[ \left(\sum\limits_{i=1}^d w(e, s_i) \left( \sum\limits_{j=1}^n \prod_{k=1}^{j-1} \frac{\omega(x^i_k, x^i_{k-1})}{\omega(x^i_k, x^i_{k+1})} \right)^{-1}\right)^{-p} \right] \nonumber \\
 & \leq &  c_p\, d^{-p}\,\E \left[ \left( \min \limits_{1 \leq i \leq d} \left( \sum\limits_{j=1}^n \prod_{k=1}^{j-1} \frac{\omega(x^i_k, x^i_{k-1})}{\omega(x^i_k, x^i_{k+1})} \right)^{-1} \right)^{-p} \right] \nonumber \\
 & = &  c_p\, d^{-p}\,\E \left[ \max\limits_{1\leq i \leq d} \left(  \sum\limits_{j=1}^n \prod_{k=1}^{j-1} \frac{\omega(x^i_k, x^i_{k-1})}{\omega(x^i_k, x^i_{k+1})} \right)^{p} \right].
\end{eqnarray}

For $x \in \T_d$, define $ R^x := \inf \{n \geq 1: Y^{\T_d(x}_n)= x \}$, and
\begin{eqnarray*}
 H& := & \inf \{k \geq 1:  \exists  \mathcal{A}_k \in \mathcal{U}_k  \mbox{ such that } {\mathbf P}^x_{\omega}(R^x = \infty) = 1  \mbox{ for all } x \in A_k\}. 
 \end{eqnarray*}
Observe that for any $n \geq 1$,
$$ L(e) I_{\{H=n\}} \leq  \tilde{L}(e, \tilde{T}_{\mathcal{A}_n}) I_{\{ H = n \}}. $$

By   H\"older's inequality  for $\epsilon>0$, with $a = 1 + \epsilon/p,$ $b = 1 + p/\epsilon$, and using (\ref{tildeLbound}) we have 
 \begin{eqnarray} \label{istlb}
   \E[L(e)^p, I_{\{ H = n \}}] &\leq & \E[ (\tilde{L}(e, \tilde{T}_{\mathcal{A}_n}))^p I_{\{ H = n \}}]
\nonumber   \\ &\leq&  \E[ \tilde{L}(e, \tilde{T}_{\mathcal{A}_n})^{pa}]^{1/a} \P(H = n)^{1/b} \nonumber \\
&\leq & \left( c_{pa}\, d^{-pa}\,\E \left[ \max\limits_{1\leq i \leq d} \left(  \sum\limits_{j=1}^n \prod_{k=1}^{j-1} \frac{\omega(x^i_k, x^i_{k-1})}{\omega(x^i_k, x^i_{k+1})} \right)^{pa} \right]\right)^{1/a} \P(H = n)^{1/b}. \nonumber\\
 \end{eqnarray}
 Now,
\begin{eqnarray} \label{mbound}
    \P(H=n) & \leq& \P \left ( \bigcup\limits_{s \sim e} \bigcap\limits_{y \in \T_d(s)\cap \T^{n-1}_d}\{ R^y < \infty\} \right) \leq \sum\limits_{s \sim e} \P \left (  \bigcap\limits_{y \in \T_d(s)\cap \T^{n-1}_d}\{ R^y < \infty\} \right) \nonumber \\ &=& \sum\limits_{s \sim e}  \prod\limits_{y \in \T_d(s)\cap \T^{n-1}_d}\P \left ( R^y < \infty \right)  , \leq  d \left(\max\limits_{s \in S, y \in \T_d(s)\cap \T^{n-1}_d}\P \left (s_y = s,  R^y < \infty \right) \right)^{(d-1)^{n-2}}. \nonumber \\
 \end{eqnarray}
Let $q (s) = \P(R^{y}< \infty : y \in \T^{1}_d, s_{y}= s)$ and $q = \max_{s \in S} q(s)$. From~\eqref{istlb} and \eqref{mbound} we have,
 \begin{eqnarray} \label{Lbound}
\E[L(e)^p, I_{\{ H = n \}}]  & \leq  & \left( c_{pa}\, d^{-pa}\,\E \left[ \max\limits_{1\leq i \leq d} \left(  \sum\limits_{j=1}^n \prod_{k=1}^{j-1} \frac{\omega(x^i_k, x^i_{k-1})}{\omega(x^i_k, x^i_{k+1})} \right)^{pa} \right]\right)^{1/a}  \left (d q^{(d-1)^{n-2}} \right)^{\frac{1}{b}} \nonumber \\ 
& \leq & c_1  c_2^n q^{\frac{(d-1)^{n-2}}{b}} 
 \end{eqnarray}
 Now it easily follow that $\E[L(e)^p]= \sum_{n=1}^\infty \E[L(e)^p, I_{\{ H = n \}}] < \infty.$

(b) From the definition of ${\cal D}$, we have
\begin{eqnarray*}
  {\cal D} &=&   \sum\limits_{x\in \T_d} \1_{ \{ T(x) \leq \tau_1 \} } =  1 + \sum_{x \neq e, x \in \T_d} \sum_{n=1}^\infty  \1_{\{T(x) \le T_n\}}\,\1_{\{l_1=n\}} \\
  &=& 1 + \sum_{n=1}^\infty \sum_{x \neq e, x \in \T_d}   \1_{\{T(x) \le T_n\}}\,\1_{\{l_1=n\}}\\
    &=& 1 + \sum_{n=1}^\infty \left ( \sum_{k=1}^{n} \sum_{x \in \T^k_d}\1_{\{ T(x) \le T_n\}}\,\right )\1_{\{l_1=n\}}\\
    &\leq& 1 + \sum_{n=1}^\infty \left (\sum_{k=1}^{n}  \sum_{x \in \T^k_d} \1_{\{T(x) < \infty \}}\,\right) \1_{\{l_1=n\}}.\\
\end{eqnarray*}
For each $k \geq 1$, we may dominate the random variable  $\sum_{x \in \T^k_d} \1_{\{T(x) < \infty \}}$ by a Geometric ($1-q$) random variable (where $q$ was defined in the previous proof). This implies
$$ \E \left (\sum_{x \in \T^k_d} \1_{\{T(x) < \infty \}}\,\right)^{2p} \leq c_p (1-q)^{-2p}.$$ Using this, Jensen's inequality,  followed by H\"older's inequality we have for all $n \geq 1$,
\begin{eqnarray*} \E  \left (\left (\sum_{x \in \T^k_d} \1_{\{T(x) < \infty \}}\,\right)^p \1_{\{l_1=n\}}  \right)  &\leq&  n^{p-1} \sum_{k=1}^{n}  \E  \left (\left (\sum_{x \in \T^k_d} \1_{\{T(x) < \infty \}}\,\right)^p \1_{\{l_1=n\}}  \right) \\ &\leq &n^{p-1} \sum_{k=1}^{n}  \sqrt{\E \left (\sum_{x \in \T^k_d} \1_{\{T(x) < \infty \}}\,\right)^{2p} }\sqrt{\P(l_1=n)}\\
  &\leq & c_p (1-q)^{-2p}  n^{p} \sqrt{\P(l_1=n)}
  \end{eqnarray*}
 Then, 

\begin{equation} 
 \E[{\cal D}^p]  \le  c_p (1-\P(R< \infty))^{-p} \sum_{n=1}^{\infty} n^p \,\P(l_1 = n)^{1/2}.
 \end{equation}
The result follows from Proposition \ref{l1bound}.

(c) Let $\{x_i : 1 \leq i \leq {\cal D}\}$ be an  enumeration of the vertices visited by the walk $X$ before time $\tau_1$.  It is easy to see that
$$ \tau_1 = \sum_{i=1}^{{\cal D}} L(x_i).$$
So,
\begin{equation}\label{tpr1}
  \E[\tau_1^p] = \E \left[ \left( \sum\limits_{i=1}^{\cal D} L(x_i) \right)^p \right]   \leq  \E \left[  {\cal D}^{p-1} \sum\limits_{i=1}^{\cal D} L(x_i)^p  \right].
  \end{equation}
  Now $$  {\cal D}^{p-1} \sum\limits_{i=1}^{\cal D} L(x_i)^p  =  \sum\limits_{i=1}^\infty {\cal D}^{p-1} L(x_i)^p I_{\{ {\cal D} \geq i \}}. $$
  For each $i \geq 1$, using H\"older's inequality twice (first with with  $q=1+\delta/p$, and $q^\prime=1+p/\delta$) and Chebychev's inequality
  , we have
  \begin{eqnarray}
    \E\left[ {\cal D}^{p-1} L(x_i)^p I_{\{ {\cal D} \geq i \}} \right] &\leq& \left[  \E[ L(x_i)^{p+\delta}] \right]^{1/q} \left[ \E[{\cal D}^{(p-1)q^\prime}I_{\{{\cal D} \geq i\}}] \right]^{1/(q^\prime)} \nonumber \\
    %&&\mbox{applying H\"older's inequality again we have}  \nonumber\\
    &\leq&  \left[ \E[ L(x_i)^{p+\delta}]\right]^{1/q} \left[ \E[{\cal D}^{2(p-1)q^\prime}] \right]^{1/(2q^\prime)}  \P({\cal D} \geq i)^{1/(2q^\prime)}  \nonumber\\
    %&&\mbox{applying Chebychev's inequality  we have}  \nonumber\\
    & \leq&  \left[ \E[ L(x_i)^{p+\delta}] \right]^{1/q} \left[ \E[{\cal D}^{2(p-1)q^\prime}] \right]^{1/(2q^\prime)}  \left[  \E({\cal D}^{4q^\prime}) \right]^{1/(2q^\prime)} \frac{1}{i^2}. \nonumber \\
  && \label{tpr2}
  \end{eqnarray}
By definition of $L$ we have
  \begin{eqnarray}
\E[L(x_i)^{p+\delta}] & \leq &\E[L(x_i)^{p+\delta}) | X_0 = x_i] = \E[L(e)^{p+ \delta}]. \label{tpr3}
\end{eqnarray}
Using (\ref{tpr1}), (\ref{tpr2}), (\ref{tpr3}), the fact that $\sum_{i=1}^\infty \frac{1}{i^2} = \frac{\pi^2}{6}$, along with part (a) and (b), we have the result.
\end{proof}

\section{Proof of Main Results}

\begin{proof}[{Proof of Theorem \ref{speed}:}] Let $\{\tau_n\}_{n \geq 1}$ be the sequence of regeneration times defined in (\ref{regt}). By Proposition (\ref{tau1}), $\E(\tau_1) < \infty$. So for all $n \geq 1$, there exists a (random) subsequence $\{k_n\}_{n\geq 1}$ such that 
\begin{equation}\label{knta}
\tau_{k_n} < n \leq \tau_{k_n + 1}.
\end{equation}
It is then readily seen that
\begin{equation} \label{spt1}  
\frac{\mid X_n\mid }{n} = \frac{  \mid X_{\tau_1} \mid + \sum_{l=1}^{k_n-1} (\mid X_{\tau_{l+1}}\mid - \mid X_{\tau_{l}}\mid) + \mid X_n \mid -  \mid X_{\tau_{k_n}}\mid}{ \tau_1 + \sum_{l=1}^{k_n-1} (\tau_{l+1} - \tau_{l}) + n -  \tau_{k_n}}. 
\end{equation}
For any $s \in S$, define
\begin{equation} \label{yz}
 Y_i(s) =  \begin{cases}\tau_1 \1_{\{s_{X_{\tau_1}} =s\} } & i = 1\\ 
    (\tau_i -\tau_{i-1}) \1_{\{s_{X_{\tau_i}=s}\}} & i > 1 \end{cases}, \quad  Z_i(s) =  \begin{cases}\mid X_{\tau_1} \mid I_{\{s_{X_{\tau_1}} =s\} } & i = 1\\ 
    (\mid X_{\tau_i} \mid  - \mid X_{\tau_{i-1}} \mid) \1_{\{s_{X_{\tau_i}=s}\}} & i > 1 \end{cases}.
  \end{equation}
Then, for each $s \in S$, $ \{Y_i(s)\}_{i \geq 1}$  and  $ \{Z_i(s)\}_{i \geq 1}$  are i.i.d. Using Proposition \ref{tau1}, 
$$\E[Y_i(s)]= \E[Y_1(s)] = \E[\tau_1 \1_{\{s_{X_{\tau_1}=s}\}} ] \leq \E[\tau_1]  < \infty, $$
and
$$\E[Z_i(s)]= \E[Z_1(s)] = \E[\mid X_{\tau_1} \mid \1_{\{s_{X_{\tau_1}=s}\}} ] \leq \E[\mid X_{\tau_1} \mid] \leq c_1 \E [\tau_1]  < \infty. $$
By strong of law of large numbers for each $s \in S$,
$$
  \lim_{n \rightarrow \infty} \frac{\sum_{i=1}^n Y_i(s)}{n} \rightarrow \E[Y_1(s)] \mbox{ and } \lim_{n \rightarrow \infty} \frac{\sum_{i=1}^n Z_i(s)}{n} \longrightarrow \E[Z_1(s)]
  $$
almost surely $\P$, as $n \rightarrow \infty$. Consequently,
\begin{equation}\label{nlimy}\frac{\sum_{l=1}^{k_n -1} \tau_{l+1} - \tau_l} {k_n-1} = \sum_{s \in S} \sum_{i=1}^{k_n-1} \frac{Y_i(s)}{k_n -1} \longrightarrow \sum_{s\in S}  \E[Y_1(s)] =\E[\tau_1],\end{equation}
  and 
\begin{equation}\label{nlimz}\frac{\sum_{l=1}^{k_n -1} \mid X_{\tau_{l+1}}\mid - \mid X_{\tau_l}\mid} {k_n-1} = \sum_{s \in S} \sum_{i=1}^{k_n-1} \frac{Z_i(s)}{k_n -1} \longrightarrow \sum_{s\in S} \E[Z_1(s)] = \E[\mid X_{\tau_1}\mid],
\end{equation} 
almost surely $\P$,  as $n \rightarrow \infty$. Now,
\begin{eqnarray}
  &&\label{ineq1} 0 \leq n - \tau_{k_n} \leq \tau_{k_n +1} - \tau_{k_n},\\
 && \E [\tau_{k_n} -\tau_{k_n-1}] = \sum_{s\in S} \E[\tau_{k_n}
   -\tau_{k_n-1} \1_{ \{s_{X_{\tau_{k_n}}}=s \} }] \nonumber \\&&= \sum_{s\in S} \E
   [\1_{\{s_{X_{\tau_{k_n}}}=s\}} \E[\tau_{k_n} -\tau_{k_n -1} \mid s_{X_{\tau_{k_n}}}=s ]] =
   \sum_{s \in S} \E [\1_{\{s_{X_{\tau_{k_n}}}=s\}} \E[\tau_2 -\tau_{1} \mid
   s_{X_{\tau_1}}=s ]]   \nonumber\\ &&\leq   \sum_{s\in S}\E [\E[\tau_2 -\tau_{1} \mid
         s_{X_{\tau_1}}=s ]] = \mid S \mid \E[\tau_2- \tau_1], \label{ineq2}\\
  &&\mbox{ and } \nonumber \\
     &&  \mbox{ $0 \leq \mid X_n \mid  - \mid X_{\tau_{k_n}} \mid \leq \mid X_{\tau_{k_n +1}} \mid - \mid X_{\tau_{k_n}} \mid$ and $n  - \mid X_{\tau_{k_n}} \mid \leq c_1 (n - \tau_{k_n})$} \label{ineq3}
\end{eqnarray}
 for some $c_1>0$. Using (\ref{nlimy})-(\ref{ineq3}) along with simple elementary algebra on (\ref{spt1})  yields
$$ \frac{\mid X_n\mid }{n} \longrightarrow \frac{\E[\mid X_{\tau_1}\mid]}{\E[\tau_1]}$$
almost surely  as $n \rightarrow \infty$.
\end{proof}

\bigskip

\begin{proof}[{Proof of Theorem \ref{CLT}:}]
Recall from  (\ref{yz}) and (\ref{knta}), $k_n, \tau_{\cdot}$, $Z_{\cdot}(\cdot)$ and $Y_{\cdot}(\cdot)$. Let $$ v =  \frac{\E[X_{\tau_1}]}{\E[\tau_1]}, \,\,  W_k(s)= Z_{k}(s) -Y_{k}(s) v, \, \, S_n(s) =  \sum\limits_{k = 1}^{n} W_k(s), \mbox{ and } \, S_n = \sum_{s\in S} S_n(s).$$
Observe that,
$$ \sqrt{n} \left( \frac{|X_n|}{n} - v \right) = \sqrt{n} \left( \frac{|X_n|}{n} - S_{k_n}- v \right) + \frac{S_{k_n}}{\sqrt{n}}.$$
As,
$$\frac{1}{\sqrt{n}} \mid  \, \, \mid X_n \mid - S_{k_n} - nv\mid  \leq \max_{1\leq i \leq k_n} \frac{\tau_i-\tau_{i-1}}{\sqrt{n}},$$
our result will follow if we establish that as $n \rightarrow \infty$
\begin{equation} \label{cclt} 
\frac{S_{k_n}  }{\sqrt{n}} 
\stackrel{d}{\rightarrow} N(0,\sigma^2), \,\, \mbox{ for some } \sigma^2 >0
\end{equation}
and for all $\delta >0$
\begin{equation} \label{step0} 
\P \left(
\max_{0\leq i\leq k_n}
\frac{\tau_{i+1}-\tau_i}{\sqrt{n}} >\delta \right) \longrightarrow 0.
\end{equation}

\noindent 
{\em Proof of (\ref{cclt}): } Note that it is easy to check that $s_{X_{\tau_i}}$ is uniform on $S$ (see \cite[Section 2]{ABD13})and thus the vector $(W_k(s))_{s \in S}$ form an i.i.d sequence with
  \begin{eqnarray*}
     \E[W_1(s)]&=& \E[X_{\tau_1}I_{\{s_{X_{\tau_1}=s}\}}] - \E[\tau_1I_{\{s_{X_{\tau_1}=s}\}}]v \mbox{ for each $s \in S$ and}\\
     \sigma_{s_1 s_2} &=&  \E \prod_{i=1}^2[\left (\mid X_{\tau_1} \mid I_{\{s_{X_{\tau_1}=s_i}\}} - \tau_1I_{\{s_{X_{\tau_1}=s_i}\}} v -\E[W_1(s_i)] \right )] \mbox{ for $s_1, s_2 \in S$.}
  \end{eqnarray*}
Therefore by the Multivariate central limit theorem, we have  
\begin{eqnarray*}
  \frac{(S_n(s)  -n \E[W_1(s)])_{s \in S}}{\sqrt{ n}} \stackrel{d}{\rightarrow} N(0,\Sigma)  
\end{eqnarray*}
where $\Sigma = (\sigma_{s_i s_j})_{s_i, s_j \in S}.$  The continuous map theorem then implies that for each $s \in S$,
\begin{eqnarray*}
  \frac{S_n(s)  -n \E[W_1(s)]}{\sqrt{ n}} \stackrel{d}{\rightarrow} N(0,\sigma_s^2)  
\end{eqnarray*}
with $\sigma^2_s =\sigma_{ss}$. If   $\sigma = 1^t\Sigma 1$ then it is immediate that as $n \rightarrow \infty$
 \begin{equation}\label{ccclt}\frac{S_n  }{\sqrt{ n}} \stackrel{d}{\rightarrow} N(0,\sigma^2). \end{equation}    Note that there is no centering
because $\sum_{s \in S} \E[W_1(s)] = 0. $ From proof of Theorem \ref{speed}, we know that  $\frac{k_n}{n} \rightarrow \frac{1}{\E[\tau_1]}.$  Using this and (\ref{ccclt}) we are done.\\

\noindent 
{\em Proof of (\ref{step0}):} Using Proposition \ref{tau1} (c) the proof  is standard (See \cite[Proof of Theorem 2.3]{BZ06}).
Since $k_{n}\leq n$, we have that for
any $\delta>0$, \begin{equation} \label{step1}   \P
\left( \max_{0\leq i\leq k_{n}}
\frac{\tau_{i+1}-\tau_i}{\sqrt{n}} >\delta \right) \leq
\sum_{i=1}^{n}  \P(\tau_1>\delta \sqrt
n).  
\end{equation} Note that, since $
\E\left[\tau_1^2\right]<\infty$, one has that $$
\sum_{i=1}^{\infty}  \P (\tau_1>\frac{\delta
\sqrt{i}}{\sqrt T})= \sum_{i=1}^\infty  \P
(\tau_1^2>\frac{\delta^2 i}{T})<\infty\,.$$ Hence, for each
$\epsilon>0$ there is a deterministic constant $N \equiv N(d,\delta,\epsilon)$ such that $$
\sum_{i=N}^{\infty}  \P (\tau_1>\frac{\delta
  \sqrt{i}}{\sqrt{T}})< \epsilon\,.$$
Therefore, \begin{equation*} 
\limsup_{n\to\infty}\sum_{i=1}^{n}
 \P(\tau_1>\delta \sqrt n)   \leq  \limsup_{n\to\infty}
\left( \sum_{i=1}^{N}  \P
(\tau_1>\delta\sqrt{n})+ \sum_{i=N+1}^\infty  \P
(\tau_1>\frac{\delta \sqrt{i}}{ \sqrt{T}}) \right) \leq \epsilon\,.
\end{equation*} As $\epsilon >0 $ was arbitrary, one concludes from the
last limit and (\ref{step1}) that  (\ref{step0}) holds. 
\end{proof}

\medskip

\newcommand{\etalchar}[1]{$^{#1}$}

\end{document}